\newtheorem{theorem}{Theorem}
\newtheorem{lemma}{Lemma}
\newtheorem{definition}{Definition}
\newtheorem{proposition}{Proposition}
\newtheorem{corollary}{Corollary}
\def\d{{\rm d}}
\def\e{{\rm e}}
\begin{document}

\title{Ergodicity of Burgers' system}
\author{Szymon Peszat}
\address{Szymon Peszat,  Institute of Mathematics, Jagiellonian University, {\L}ojasiewicza 6, 30--348 Krak\'ow, Poland}
\email{napeszat@cyf-kr.edu.pl}

\author{Krystyna Twardowska}
\address{Krystyna Twardowska, Faculty of Applied Informatics and Mathematics, Warsaw University of Life Sciences--SGGW, ul. Nowoursynowska 159, 02-776 Warsaw, Poland}
\author{Jerzy Zabczyk}
\address{Jerzy Zabczyk, Institute of Mathematics, Polish Academy of Sciences, ul. \'Sniadeckich 8, 00-956 Warsaw, Poland}
\date{}

 \thanks{The work of Szymon Peszat    was supported by Polish National Science Center grant  DEC2013/09/B/ST1/03658. }

\begin{abstract}
We consider a stochastic version of a system of coupled two equations formulated by Burgers \cite{Burgers} with the aim to describe the laminar and turbulent motions of a fluid in a channel. The existence and uniqueness of the solution as well as the irreducibility property of such system were given by Twardowska and Zabczyk  \cite{Twardowska-Zabczyk1, Twardowska-Zabczyk2, Twardowska-Zabczyk3}.  In the paper the existence of a unique invariant measure is investigated. The paper generalizes the results of  Da Prato, Debussche and Temam \cite{DaPrato-Debussche-Temam},  and Da Prato and Gatarek \cite{DaPrato-Gatarek},  dealing  with one equation describing the turbulent motion only.
\end{abstract}

\keywords{Stochastic Burgers' system, the existence and uniqueness of an invariant measure, strong Feller and irreducibility.}
\subjclass[2000]{35Q72, 60H15}

\maketitle
\section{Introduction}

Let $U=U(t)$ denote the \emph{primary} velocity of the fluid, parallel to the walls of the channel, and let  $v=v(t,x)$ denote the \emph{secondary} velocity of the turbulent motion. According to \cite{Burgers}, they satisfy the following system of equations
\begin{align}
\frac{\d U }{\d t} ( t)&=P-\nu U( t) -\int_0^1v^2( t,y) \d y,   \label{1} \\
\frac{\partial v }{\partial t}( t,x) &=\nu \frac{\partial ^2v }{\partial x^2}( t,x)+U( t) v( t,x) -\frac \partial {\partial x}( v^2( t,x))
\label{2}
\end{align}
for $t>0$ and $x\in (0,1)$. The system is considered with the initial and boundary conditions
\begin{equation} \label{3}
\begin{aligned}
U( 0) &=U_{0},\ v( 0,x) =v_0( x) ,\quad  x\in (0,1)\\
v( t,0) &=v( t,1) =0,\quad  t>0. 
\end{aligned}
\end{equation}

In \eqref{1} and \eqref{2}, $P$ is  a constant representing an exterior force, analogous to the mean pressure gradient in the hydrodynamic case, and $\nu =\frac{\mu }{\rho }>0$, where $\rho$ is the density and  $\mu$ is the viscosity of the fluid. We assume that $\rho$ and $\mu$ are constant.   The system is derived from the theory of turbulent fluid motion and has similar properties to  the Navier--Stokes equation, but is much simpler to study.

The existence and uniqueness of the global solution  was established  by D\l otko  \cite{Dlotko}, using the Galerkin method. The necessary and sufficient conditions on $P$ and $\nu $ such that the solutions to \eqref{1}--\eqref{3} satisfy $U(t)\rightarrow 0$ and $v(t)\rightarrow 0$ as $t\rightarrow +\infty $ are given in \cite{Cholewa-Dlotko} and \cite{Dlotko}.

In order to obtain nontrivial limiting behaviour  of solutions as $t\rightarrow +\infty$, the   following stochastic perturbation was  proposed 
\begin{equation}\label{4}
\d U(t) =\left(P-\nu U( t) -\int_0^1  v^2(t,y)\d y \right)\d t+g_0\left(U(t),v(t)\right)\d W_0(t),  
\end{equation}
\begin{equation}\label{5}
\begin{aligned}
\d v(t,x) &=\left(\nu \frac{\partial ^2v }{\partial x^2}( t,x)+U( t) v(t,x)-\frac \partial {\partial x}( v^2( t,x) ) \right)\d t    \\
&\quad+g_1\left(U(t),v( t,\cdot ) \right)(x)\d W_1(t,x) 
\end{aligned}
\end{equation}
with the initial and boundary conditions \eqref{3}. Above $W_0$ is a real-valued Wiener process,  $W_1$ is an independent of $W_0$  cylindrical Wiener process in $L^2:= L^2(0,1)$, see the next section,  $g_0\colon \mathbb{R}\times L^2\mapsto \mathbb{R}$  and $g_1\colon \mathbb{R}\times L^2\mapsto L^2 $ are Lipschitz continuous functions. 

The existence and uniqueness of the global solution  to \eqref{3}--\eqref{5}  in the space $H:= \mathbb{R}\times L^2$ was established  by Twardowska and Zabczyk  \cite{Twardowska-Zabczyk1, Twardowska-Zabczyk2,Twardowska-Zabczyk3}. The existence and uniqueness of the solution for the classical one dimensional stochastic Burgers equation driven by cylindrical Wiener process, as well as the existence of an invariant measure was established by Da Prato, Debussche and Temam \cite{DaPrato-Debussche-Temam}.  

As the main result, we prove the existence of an invariant measure to system \eqref{3}--\eqref{5}. In the proof we use  the Krylov--Bogolyubov theorem adapting   the method od Da Prato and Gatarek  \cite{DaPrato-Gatarek}, see also \cite{DaPrato-Zabczyk}. The main difficulty is  caused  by  the fact that since the noise is not an $\mathbb{R}\times L^2$-valued process we cannot use the It\^o formula.  Due to existence of  nonlinear diffusion terms and different structure of our system we cannot apply the Hopf--Cole transformation as for example in  \cite{E-Khanin-Mazel-Sinai}.  

A similar problem of an hydrodynamic  equation coupled with heat equation was studied by Ferrario  \cite{Ferrario}. In the considered there B\'ennard  problem:  a  viscous  fluid,  in  a  rectangular container, is heated from below and the top surface is taken at constant temperature. Heating the fluid, its  density changes and the gradient of density gives rise  to a motion of particles from the bottom to the top of the container.

\section{Preliminaries and formulation of the main result}

Let $L^2:=L^2(0,1)$ be the Hilbert space equipped with the  scalar product  $\langle \varphi ,\psi \rangle =\int_0^1\varphi(x)\psi (x)\d x$ and the corresponding norm $\| \cdot \|$.

Let $(\Omega ,\mathcal{F},(\mathcal{F}_t)_{t\ge 0 },\mathbb{P})$ be a filtered probability space. We assume that $(\Omega, \mathcal{F},\mathbb{P})$ is complete, the filtration is right-continuous and each $\mathcal{F}_t$ contains  all $\mathbb{P}$-null sets in $\mathcal{F}$. We consider the one-dimensional Wiener process $W_0(t)$,  $t\ge 0$, and a  cylindrical Wiener process $W_1(t,\cdot )$, $t\ge 0$,  in $L^2$. It can be defined as the sum 
$$
W_1(t,x)=\sum_{k=1}^\infty W^k(t)e_k(x),  
$$
where $(e_k)$ is any orthonormal basis of $L^2$ and $(W^k)$ are independent real-valued Wiener processes. The sum converges however not in $L^2$ but in any Hilbert space $V$ such that the imbedding  $L^2\hookrightarrow V$ is  Hilbert--Schmidt.  We assume that for all $t>s>0$, $k\in \mathbb{N}$, the increments  $W_0(t)-W_0(s)$ , $W^k(t)-W^k(s)$ are independent of the $\sigma$-field $\mathcal{F}_s$.    

From now on,  we assume that  $(e_k)$ is the  orthonormal basis of the eigenvectors of the Laplace operator on $L^2$ with Dirichlet boundary conditions, that is 
\begin{equation}\label{6}
e_k(x)=\sqrt{\frac 2\pi }\sin k\pi x,\qquad x\in (0,1),\  k=1,2,\dots . 
\end{equation}

\begin{definition}
A pair of processes $(U,v)$ is \emph{a weak solution} to problem \eqref{3}--\eqref{5} if and only if they are adapted  with  continuous trajectories  in $\mathbb{R}$ and $L^2$, respectively, and  such that  for arbitrary $t\geq 0$, 
\begin{align*}
U(t)&=U_0+tP-\nu \int_0^tU(s)\d s-\int_0^t\| v(s)\|^2\d s  \\
&+\int_0^tg_0\left(U(s),v(s)\right) \d W_0(s),\quad \mathbb{P}\text{-a.s.,}  
\end{align*}
and for arbitrary $\varphi \in C_0^\infty (0,1)$, 
\begin{align*}
\langle v(t),\varphi \rangle  &= \langle v_0,\varphi \rangle +\int_0^tU( s) \langle v(s),\varphi \rangle \d s\\
&\quad +\int_0^t\left\langle v(s),\nu \frac{\d  ^2\varphi }{\d  x^2} \right\rangle \d s+\int_0^t\left\langle v^2(s),\frac{\d  \varphi }{\d  x}\right \rangle \d s
\\
&\quad +\int_0^t\left\langle \varphi ,g_1\left(U(s),v(s, \cdot)\right) \d W_1(s)\right\rangle ,\qquad \mathbb{P}\text{-a.s.}  
\end{align*}

\end{definition}

We introduce now an  equivalent concept of the \emph{integral or mild solution}, see e.g.  \cite{DaPrato-Zabczyk1, DaPrato-Zabczyk}. Namely, let $(e_k)$ be the orthonormal basis of $L^2$ given by \eqref{6}, and let $S(t)$, $t\geq 0$, be the classical heat semigroup on $L^2$. Then, for any $\psi\in L^2$,
$$
S(t)\psi=\sum_{k=1}^\infty \e^{-\frac{\pi ^2}\nu k^2t}\langle \psi ,e_k\rangle e_k  
$$
with the convergence of the series in $L^2$. It is well known that the generator $A$ of the semigroup $S(t)$, $t\geq 0$, is identical with the second derivative operator $\frac{\d  ^2}{\d  x^2}$ on the domain $D(A)$ consisting of functions $\psi $ such that $\psi $ and $\frac{\d \psi }{\d  x}$ are absolutely continuous, $\psi,\ \frac{\d \psi }{\d  x},  \  \frac{\d  ^2\psi }{\d  x^2}\in L^2$,  and  $\psi(0)=\psi(1)=0$. 

A proof of the following lemma can be find e.g.  in \cite{DaPrato-Debussche-Temam, Twardowska-Zabczyk1, Twardowska-Zabczyk2, Twardowska-Zabczyk3}.
\begin{lemma}\label{Lemma1}
The operators $S(t)$, $t> 0$, can be extended linearly to the space of all distributions of the form $\frac {\d \psi}  {\d  x}$, $\psi\in L^1(0,1)$, in such a way that they  take values in $L^2$ and for all $\psi\in L^2$ and $t\ge 0$, 
$$
\left\| S(t)\frac {\d  \psi }{\d  x}\right\| \leq \| \psi\| _{L^1(0,1)}\left(\sum_{k=1}^\infty \frac{2\pi }{\sqrt{\nu }}k^2\e^{- \frac{2\pi ^2}\nu k^2t}\right)^{1/2}
\le \frac{C}{\sqrt{t}}\| \psi\| _{L^1(0,1)}. 
$$
\end{lemma}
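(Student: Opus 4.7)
The plan is to define the extended operator $S(t)\partial_x$ through its Fourier expansion in the eigenbasis $(e_k)$ and then obtain the estimate via Parseval's identity together with the fact that the derivatives $e_k'$ are uniformly bounded by a constant times $k$ in the sup norm.

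First, for smooth $\psi\in C_0^\infty(0,1)$ the distribution $\partial_x\psi$ lies in $L^2$, and since each $e_k$ vanishes at the endpoints, integration by parts gives $\langle \partial_x\psi, e_k\rangle = -\langle\psi, e_k'\rangle$. The classical spectral formula then yields
$$S(t)\partial_x\psi \;=\; -\sum_{k=1}^\infty \e^{-\lambda_k t}\langle\psi,e_k'\rangle\,e_k,$$
where $\lambda_k$ denotes the $k$-th eigenvalue of $-A$. Since $e_k'$ is a rescaled cosine, one has $\|e_k'\|_\infty\le Ck$ uniformly in $x$, so the pairing $\langle\psi,e_k'\rangle$ remains meaningful and is bounded by $Ck\|\psi\|_{L^1(0,1)}$ for every $\psi\in L^1(0,1)$. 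I would \emph{define} the extension $S(t)\partial_x\psi$ for $\psi\in L^1(0,1)$ by the same series; its consistency with the classical spectral definition whenever $\partial_x\psi$ already lies in $L^2$ is automatic from the identity above.

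Second, the norm estimate is immediate from Parseval:
$$\|S(t)\partial_x\psi\|^2 \;=\; \sum_{k=1}^\infty \e^{-2\lambda_k t}|\langle\psi,e_k'\rangle|^2 \;\le\; \|\psi\|_{L^1(0,1)}^2\sum_{k=1}^\infty \|e_k'\|_\infty^2\,\e^{-2\lambda_k t}.$$
Inserting the explicit values of $\|e_k'\|_\infty$ and $\lambda_k$ read off from \eqref{6} and the spectral formula for $S(t)$ gives the first inequality of the lemma after taking square roots. The reduction to $C/\sqrt{t}$ is then a routine Gaussian-series estimate, obtained by comparing $\sum_k k^2\e^{-ck^2t}$ with $\int_0^\infty x^2\e^{-cx^2t}\,dx$ via the change of variables $y=k\sqrt{t}$.

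The main obstacle, in my view, is conceptual rather than computational: one must justify that the defining series converges in $L^2$ for every $\psi\in L^1$ and every $t>0$, and that the resulting map is genuinely the unique continuous extension of the operator originally defined on smooth derivatives $\partial_x\psi\in L^2$. Both points are handled by the uniform coefficient bound $|\langle\psi,e_k'\rangle|\le Ck\|\psi\|_{L^1(0,1)}$ combined with the Gaussian decay $\e^{-\lambda_k t}=\e^{-ck^2 t}$, which guarantees absolute $L^2$-convergence for every fixed $t>0$ and permits passage to the limit along any approximation of $\psi\in L^1$ by smooth functions.
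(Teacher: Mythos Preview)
Your argument is correct and is precisely the standard proof one finds in the references the paper cites. Note that the paper itself does not supply a proof of this lemma; it simply writes ``A proof of the following lemma can be found e.g.\ in \cite{DaPrato-Debussche-Temam, Twardowska-Zabczyk1, Twardowska-Zabczyk2, Twardowska-Zabczyk3}.'' Your spectral definition via $S(t)\partial_x\psi=-\sum_k \e^{-\lambda_k t}\langle\psi,e_k'\rangle e_k$, together with the elementary bound $|\langle\psi,e_k'\rangle|\le \|e_k'\|_\infty\|\psi\|_{L^1}$ and Parseval, is exactly the computation carried out in those sources, so there is nothing to compare.

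One small remark: when you invoke integration by parts for smooth $\psi$, you need not assume $\psi\in C_0^\infty(0,1)$; it is the vanishing of $e_k$ at the endpoints that kills the boundary term, so the identity $\langle\partial_x\psi,e_k\rangle=-\langle\psi,e_k'\rangle$ already holds for any $\psi$ with $\partial_x\psi\in L^2$. This is worth saying explicitly, since the lemma is later applied with $\psi=v^2$, which need not vanish at the boundary.
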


\begin{definition}
A pair of continuous adapted processes $(U,v)$ with values in $\mathbb{R}$ and $L^2$, respectively, is said to be \emph{an integral or mild solution to problem} \eqref{3}--\eqref{5} if
$$
\begin{aligned}
U(t)&=\e^{-\nu t}U_0+\int_0^t\e^{-\nu (t-s)}\left(P-\| v(s)\| ^2\right)\d s
 \\
&\quad +\int_0^t\e^{-\nu (t-s)}g_0\left(U(s),v(s)\right)\d W_0(s),  
\end{aligned}
$$
and
$$
\begin{aligned}
v(t)&=S(t)v_0+\int_0^tS(t-s)\left( U( s) v(s) - \frac {\d v^2}{\d  x}(s)\right)\d s  \\
&\quad +\int_0^tS(t-s)g_1\left(U(s),v(s)\right)\d W_1(s).  
\end{aligned}
$$
\end{definition}

In the integral
$$
\int_0^tS(t-s)\frac {\d v^2} {\d x}(s)\d s
$$
we use the extension of the operator $S(t-s)$ described in Lemma \ref{Lemma1}.

\bigskip
Let $Z_T^p$, $p>1$, denote the space of all continuous adapted processes $X = (U,v)$  on $[0,T]$ with values on $\mathbb{R}\times L^2$ such that
$$
\left\| (U, v)\right\| _{p,T}=\| U\| _{1,p,T}+\| v\| _{2,p,T}<+\infty, 
$$
where 
$$
\begin{aligned}
\| U\| _{1,p,T}&=\left(\mathbb{E}\, \sup_{t\in [0,T]}\vert U(t)\vert  ^p\right)^{1/p},\\
\|v\|_{2,p,T}&= \left(\mathbb{E}\, \sup_{t\in [0,T]}\| v(t)\| ^p\right)^{1/p}.   
\end{aligned}
$$

The first part of the theorem below concerning the existence of the solution was proven  in \cite{Twardowska-Zabczyk1,Twardowska-Zabczyk2,Twardowska-Zabczyk3} and it is stated here only for the sake of completeness.  The second and the third parts concerning existence and uniqueness of the invariant measure are the main result of the present paper. Their proofs are presented in Sections \ref{SInvariant} and \ref{SSFeller-Ireducibility}, respectively. 
\begin{theorem}
Assume that the mappings  $g_0\colon \mathbb{R}\times L^2\mapsto \mathbb{R}$ and $g_1\colon \mathbb{R}\times L^2\mapsto L^2$ are bounded and Lipschitz  continuous. Then the following assertions are true:\\
$(i)$  For any initial values $U_0\in \mathbb{R}$ and $v_0\in L^2$,  system \eqref{3}--\eqref{5} has a unique weak solution in any $Z_T^p$-space, $T>0$, $p\ge 1$.  Moreover, \eqref{3}--\eqref{5} defines Feller Markov process on the Hilbert space $H:= \mathbb{R}\times L^2$. \\
$(ii)$ There is an invariant measure $\mu$  for the Markov family defined by system \eqref{3}--\eqref{5}. \\
$(iii)$ If $g_0$ and $g_1$ are separated from zero; i.e. there is a constant $C>0$ such that 
$$
\left\vert \frac{1}{g_0(r,\psi)}\right\vert  \le C, \quad \left\| \frac{1}{g_1(r,\psi)}\right\|\le  C \qquad \text{for all $r\in \mathbb{R}$ and $\psi \in L^2$},
$$
then the corresponding transition semigroup $(P_t)$ is strong Feller and irreducible, and the invariant measure $\mu$  is unique. Finally, for any initial value $(U_0,v_0)$ the law $\mathcal{L}(U(t),v(t))$ of the solution at time $t$ converges in the total variation norm to $\mu$ as $t\to +\infty$. 
\end{theorem}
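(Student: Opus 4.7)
The plan is to handle (ii) by a Krylov--Bogolyubov argument and (iii) by Doob's theorem (strong Feller plus irreducibility), with part (i) invoked from \cite{Twardowska-Zabczyk1,Twardowska-Zabczyk2,Twardowska-Zabczyk3}. For (ii), the aim is a time-uniform energy estimate that yields tightness of the Cesaro averages
$$
\mu_T := \frac{1}{T}\int_0^T \mathcal{L}(U(t),v(t))\,\d t
$$
on $H=\mathbb{R}\times L^2$. The decisive structural feature of the Burgers system is a cancellation in the energy identity: formally applying It\^o to $\tfrac12(U^2+\|v\|^2)$, the drift term $-U\|v\|^2$ from \eqref{4} is annihilated by the contribution $\langle v,Uv\rangle=U\|v\|^2$ coming from \eqref{5}, while the convective term $\langle v,\partial_x(v^2)\rangle=\tfrac23[v^3]_0^1=0$ vanishes by the Dirichlet condition. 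Boundedness of $g_0,g_1$ keeps the It\^o correction $O(t)$, and one expects
$$
\mathbb{E}\bigl(U(t)^2+\|v(t)\|^2\bigr)+2\nu\,\mathbb{E}\!\int_0^t\!\bigl(U(s)^2+\|\partial_x v(s)\|^2\bigr)\d s\le C(1+t).
$$

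Here lies the main obstacle: because $W_1$ is only cylindrical on $L^2$, It\^o's formula cannot be applied directly to $\|v\|^2$. I would circumvent this via a Galerkin truncation onto $\mathrm{span}(e_1,\dots,e_N)$, on which the noise is finite-dimensional and the identity is rigorous; the $N$-independent constants persist after passing to the limit using convergence of the Galerkin scheme to the unique mild solution from (i). Once the bound is in hand, Markov's inequality combined with Rellich's compact embedding $H^1_0(0,1)\hookrightarrow L^2(0,1)$ produces, for each $\varepsilon>0$, a compact $K_\varepsilon\subset H$ with $\mu_T(K_\varepsilon^c)<\varepsilon$ uniformly in $T\ge 1$. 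Tightness of $(\mu_T)$ together with the Feller property from (i) then give an invariant measure $\mu$ via Krylov--Bogolyubov.

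For (iii), irreducibility has already been established in \cite{Twardowska-Zabczyk1,Twardowska-Zabczyk2,Twardowska-Zabczyk3}, so the work reduces to proving strong Feller, after which Doob's theorem delivers uniqueness of $\mu$ and convergence $\mathcal{L}(U(t),v(t))\to\mu$ in total variation norm. Strong Feller is exactly where the nondegeneracy hypothesis $|1/g_0|\le C$, $\|1/g_1\|\le C$ becomes indispensable. The cleanest route is a Bismut--Elworthy--Li formula: for $\varphi$ bounded measurable, $t>0$ and $h\in H$,
$$
D(P_t\varphi)(U_0,v_0)\,h=\frac{1}{t}\,\mathbb{E}\!\left[\varphi(U(t),v(t))\int_0^t\bigl\langle G^{-1}(s)D_h X(s),\,\d W(s)\bigr\rangle\right],
$$
where $G$ is the diffusion coefficient built from $g_0,g_1$ and $D_h X$ denotes the linearisation of the solution flow in direction $h$. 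Global boundedness of $G^{-1}$ is precisely what makes this expression square-integrable and yields $|D(P_t\varphi)(U_0,v_0)\,h|\le c(t)\|\varphi\|_\infty\|h\|_H$. As before, the identity is first derived on the Galerkin approximation and then transferred to $(P_t)$ by passing to the limit, giving strong Feller at every $t>0$ and closing the argument.
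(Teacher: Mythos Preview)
Your plan for (ii) has a genuine gap: the It\^o correction for $\|v\|^2$ is \emph{not} $O(t)$. With cylindrical noise the correction term is the trace of the covariance, namely
\[
\tfrac12\sum_{k\ge 1}\bigl\|g_1(U,v)\,e_k\bigr\|_{L^2}^2
=\tfrac12\int_0^1 g_1(U,v)(x)^2\sum_{k\ge 1}e_k(x)^2\,\d x=+\infty
\]
whenever $g_1\not\equiv 0$, because the multiplication operator $\psi\mapsto g_1(U,v)\psi$ is bounded on $L^2$ but never Hilbert--Schmidt. On the $N$-dimensional Galerkin level the sum truncates to $\sum_{k\le N}\|g_1 e_k\|^2\sim cN$, so the constant in your inequality
\[
\mathbb{E}\bigl(U_N^2+\|v_N\|^2\bigr)+2\nu\,\mathbb{E}\!\int_0^t\!\bigl(U_N^2+\|\partial_x v_N\|^2\bigr)\d s\le C_N(1+t)
\]
blows up with $N$ and nothing survives the limit. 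This is precisely the obstruction the paper flags (``since the noise is not an $\mathbb{R}\times L^2$-valued process we cannot use the It\^o formula'').

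The paper's remedy, following Da~Prato--G\c{a}tarek, is to split off the stochastic convolution: set $Z_L(t)=\int_0^t e^{-L(t-s)}S(t-s)g_1\,\d W_1(s)$ and $Y_L$ analogously, then $V_L:=v-Z_L$, $U_L:=U-Y_L$ solve a \emph{deterministic} system with random coefficients, to which the ordinary chain rule applies without any trace term. The damping parameter $L$ is there so that $\sup_t\mathbb{E}\|Z_L(t)\|_{H^{1/4}}^p$ can be made arbitrarily small (Lemma~\ref{L2}); the energy identity for $(U_L,V_L)$ still enjoys the cancellation you noticed, and one obtains a differential inequality for $y_L=\|V_L\|^2+U_L^2$ (Lemma~\ref{L3}). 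Because $y_L$ itself need not have bounded expectation uniformly in $t$, a further trick is needed: one works with $\log(y_L\vee M/2)$ to convert the inequality into a time-averaged bound on $\mathbb{P}(y_L\ge M/2)$ (Corollary~\ref{C2}), and only then closes the Krylov--Bogolyubov argument in $\mathbb{R}\times H^{1/4}$.

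For (iii) your Bismut--Elworthy--Li outline is in the right spirit and matches the paper's route, but note that the derivative $D_hX$ solves a linearised equation whose coefficients involve $D\mathcal N$ and $DB$, which are unbounded; the paper handles this by first localising the nonlinearities ($\mathcal N_n$, $B_n$) rather than by Galerkin truncation, and then removes the cutoff via a stopping-time argument. A pure Galerkin passage would again require care, since the Lipschitz constant of $P_t^{N}f$ must be shown uniform in $N$.
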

\section{Auxiliary results}

To prove the existence of an invariant measure we shall apply the classical Krylov--Bogo\-lyubov criterion  in the form of the following proposition, see \cite{DaPrato-Zabczyk}.

\begin{proposition}\label{P1} \textbf{(Krylov--Bogo\-lyubov)}  Assume that  $X$ is a Feller Markov process on a Hilbert space $H$ and $H_0\subset H$ is a Hilbert space contained in $H$ with compact embedding. If for some X(0) and for  each $\varepsilon >0$ there exists a constant $M$ such that for all $t>0$, 
$$
\frac 1t\int_0^tP\left(\| X(s)\| _{H_0}\geq M\right)\d s\leq \varepsilon ,
$$
then there exists an invariant measure for $X$ in $H$.
\end{proposition}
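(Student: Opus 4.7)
The plan is to execute the standard time-averaging argument of Krylov and Bogolyubov, proceeding in three stages: build candidate invariant measures by averaging the law of $X$ in time, extract a weak limit via tightness, and verify invariance by passing to the limit with the help of the Feller property.

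First, I would introduce the Ces\`aro-averaged occupation measures
$$
\mu_t(\Gamma):=\frac{1}{t}\int_0^t \mathbb{P}(X(s)\in \Gamma)\,\d s,\qquad t>0,\ \Gamma\in\mathcal{B}(H),
$$
which are probability measures on $H$ (with the single starting point $X(0)$ from the hypothesis). The goal is to extract a weakly convergent subsequence whose limit is invariant.

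Second, I would verify tightness of $\{\mu_t\}_{t>0}$ on $H$. Given $\varepsilon>0$, apply the hypothesis to choose $M>0$ such that $\mu_t(\{x:\|x\|_{H_0}\ge M\})\le \varepsilon$ for every $t>0$. The set $B_M:=\{x\in H_0:\|x\|_{H_0}\le M\}$ is bounded in $H_0$, hence by the compactness of the embedding $H_0\hookrightarrow H$ it is relatively compact in $H$; thus $\mu_t(H\setminus\overline{B_M})\le \varepsilon$ uniformly in $t$. Prokhorov's theorem then yields a sequence $t_n\to +\infty$ and a Borel probability measure $\mu$ on $H$ such that $\mu_{t_n}\to \mu$ weakly.

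Third, I would show $\mu$ is invariant for the transition semigroup $(P_h)_{h\ge 0}$ of $X$. By the Feller assumption, $P_h\varphi \in C_b(H)$ for every $\varphi\in C_b(H)$. A Markov-property computation and a change of variable give
$$
\int_H P_h\varphi\,\d\mu_t-\int_H \varphi\,\d\mu_t =\frac{1}{t}\int_t^{t+h}\mathbb{E}\varphi(X(s))\,\d s -\frac{1}{t}\int_0^{h}\mathbb{E}\varphi(X(s))\,\d s,
$$
whose absolute value is bounded by $2h\|\varphi\|_\infty/t\to 0$ as $t\to +\infty$. Evaluating along $t=t_n$ and using weak convergence together with the continuity and boundedness of $P_h\varphi$ yields $\int_H P_h\varphi\,\d\mu=\int_H \varphi\,\d\mu$, so $P_h^{*}\mu=\mu$ for every $h\ge 0$, i.e., $\mu$ is invariant.

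The step that requires the most care is the interplay between the two topologies. The hypothesis controls the mass in the stronger norm $\|\cdot\|_{H_0}$, which must be converted into tightness in the weaker space $H$ via the compact embedding, while the Feller property (stated with respect to the topology of $H$) is what allows invariance to be pushed through the weak limit. The hypothesis is calibrated precisely so that both conversions operate at once; beyond that, the argument is entirely classical and uses nothing specific to the Burgers system.
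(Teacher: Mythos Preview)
Your proof is correct and is precisely the classical Krylov--Bogolyubov argument. The paper itself does not prove this proposition at all; it merely states it and directs the reader to Da Prato and Zabczyk's monograph \cite{DaPrato-Zabczyk}, where the argument you have written is given.
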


Let us recall that  $\nu$ is the viscosity parameter. The eigenvalues of $A$ are $-\lambda_m = {\frac{\pi^2 m^2}{\nu}}$,  $m=1,\ldots$.   The operator $-A$ is self-adjoint and positive definite and for arbitrary $\alpha >0$ the fractional power $(-A)^{\alpha}$ is given by the formulae,
$$
(-A)^{\alpha}\psi = \sum_{m=1}^{\infty} \lambda_m^{\alpha}\psi_me_m\quad \text{for}\quad  \psi=\sum_{m=1}^{\infty} \psi_m e_m \in D(-A)^{\alpha}, 
$$
where $\psi \in D(-A)^{\alpha}$ if and only if
$$
\|(-A)^{\alpha}\psi \| =\left(\sum_{m=1}^{\infty} \lambda_m^{ 2\alpha} \psi _m^2 \right) ^{1/2} < +\infty.
$$
We set $H^{2\alpha} = D(-A)^{\alpha}$. 

In our  proof of the existence of an invariant measure to \eqref{3}--\eqref{5} we will show that the  assumptions of the proposition above are satisfied  for    $H=\mathbb{R}\times L^2$ and  $H_0=\mathbb{R}\times H^{\frac 14}$. 

Let $X(t)=\left (U(t), v(t)\right)$ be the solution to problem \eqref{3}--\eqref{5}.   We assume that $U(0)=0$ and $v_0=0$. Given  $L>0$ set 
\begin{equation} \label{7}
\begin{aligned}
Z_L (t)&:=\int_0^t\e^{-L(t-s)} S(t-s)g_1\left(U(s),v(s)\right)\d W_1(s),\qquad t>0, \\
Y_L(t)&:= \int_0^t\e^{-(L+\nu) (t-s)}g_0\left(U(s),v(s)\right)\d W_0(s),\qquad t>0.
\end{aligned}
\end{equation}
\begin{lemma}\label{L2}
Let $p\ge 2$ and $\varepsilon >0$. There exists $L_0\ge 0$ such that for $L\ge L_0$ we have
$$
\sup_{0\leq t<+\infty }\mathbb{E}\left( \left\| Z_L (t)\right\|_{H^\frac 14} ^p+ \left\vert Y_L(t)\right\vert ^p\right)  
\le \varepsilon .  
$$
\end{lemma}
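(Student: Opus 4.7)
The plan is to treat the scalar part $Y_L$ and the Hilbert-valued part $Z_L$ separately; both reduce to applications of BDG combined with heat-kernel estimates, with the exponential damping $\e^{-L(t-s)}$ providing the required smallness.

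For $Y_L$, at each fixed $t$ the map $r\mapsto \int_0^r \e^{-(L+\nu)(t-s)}g_0(U(s),v(s))\d W_0(s)$ is a scalar martingale, and $Y_L(t)$ is its value at $r=t$. Scalar BDG and boundedness $|g_0|\le M_0$ give
\begin{equation*}
\mathbb{E}|Y_L(t)|^p \le c_p \mathbb{E}\left(\int_0^t \e^{-2(L+\nu)(t-s)} g_0(U(s),v(s))^2 \d s\right)^{p/2} \le c_p M_0^p\bigl(2(L+\nu)\bigr)^{-p/2},
\end{equation*}
a bound uniform in $t$ that tends to $0$ as $L\to\infty$.

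For $Z_L$, at fixed $t$ I would write $(-A)^{1/8}Z_L(t)=\int_0^t \Psi_t(s)\d W_1(s)$ with $\Psi_t(s)=\e^{-L(t-s)}(-A)^{1/8}S(t-s)M_{g_1(U(s),v(s))}$, where $M_h$ denotes multiplication by $h$. Since this is again a Hilbert-valued stochastic integral at fixed $t$, the vector-valued BDG inequality yields
\begin{equation*}
\mathbb{E}\|(-A)^{1/8}Z_L(t)\|^p \le c_p\, \mathbb{E}\left(\int_0^t \|\Psi_t(s)\|_{HS}^2 \d s\right)^{p/2}.
\end{equation*}
The central analytic step is the estimate
\begin{equation*}
\|(-A)^{1/8}S(u)M_h\|_{HS}^2 \le C\|h\|^2 u^{-3/4},\qquad u>0,\ h\in L^2.
\end{equation*}
This is proved by expanding in the basis $(e_k)$: the double sum collapses via Parseval to $\sum_j \lambda_j^{1/4}\e^{-2\lambda_j u}\|h e_j\|^2$, and since $|e_j|\le\sqrt{2}$ pointwise one has $\|h e_j\|^2\le 2\|h\|^2$; the remaining series $\sum_j \lambda_j^{1/4}\e^{-2\lambda_j u}$ with $\lambda_j=\pi^2 j^2/\nu$ is $O(u^{-3/4})$ as $u\to 0^+$ by comparison with $\int_0^\infty x^{1/2}\e^{-cx^2 u}\d x$. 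Combined with the bound $\|g_1(U,v)\|\le M_1$, this gives
\begin{equation*}
\mathbb{E}\|(-A)^{1/8}Z_L(t)\|^p \le c_p(CM_1^2)^{p/2}\left(\int_0^\infty \e^{-2Lu}u^{-3/4}\d u\right)^{p/2} = c_p'\, M_1^p\, L^{-p/8},
\end{equation*}
again uniform in $t$ and vanishing as $L\to\infty$. Choosing $L_0$ large enough so that both contributions are $\le \varepsilon/2$ finishes the proof.

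The main obstacle is the Hilbert-Schmidt estimate for $(-A)^{1/8}S(u)M_h$: the multiplication operator $M_h$ is unbounded on $L^2$ when $h\notin L^\infty$, so the HS norm only makes sense after the smoothing action of $S(u)$. One has to be careful to use only that $\|h\|_{L^2}\le M_1$ (which is all that the assumption on $g_1$ provides), rather than any $L^\infty$-type bound; the pointwise estimate $|e_j|\le\sqrt{2}$ on the sine basis is exactly what makes this possible and gives the borderline exponent $u^{-3/4}$, which is integrable against $\e^{-2Lu}$ for any $L>0$.
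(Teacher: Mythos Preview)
Your proof is correct and follows the same overall strategy as the paper's: Burkholder's inequality reduces the estimate to controlling the time-integral of a Hilbert--Schmidt norm, and the exponential damping provides the smallness. The difference lies in how the Hilbert--Schmidt norm of $(-A)^{1/8}S(u)M_{g_1}$ is handled. The paper first factors out the multiplication operator via $\|(-A)^{1/8}S_L(t-s)M_{g_1}\|_{(HS)}\le \|M_{g_1}\|_{L(L^2,L^2)}\|(-A)^{1/8}S_L(t-s)\|_{(HS)}$, and then splits $S(s)=S(s/2)S(s/2)$ to estimate the remaining semigroup HS norm, concluding via dominated convergence. Your approach instead expands the HS norm directly in the sine basis, uses the pointwise bound $|e_j|\le\sqrt 2$ to get $\|h e_j\|^2\le 2\|h\|^2$, and arrives at the explicit rate $L^{-p/8}$. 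Both routes yield the same critical singularity $u^{-3/4}$ in the time variable. What your version buys is that it only uses the $L^2$-bound $\|g_1(r,\psi)\|\le M_1$ actually provided by the hypotheses, whereas the paper's factorisation through $\|M_{g_1}\|_{L(L^2,L^2)}$ implicitly requires an $L^\infty$-type bound on $g_1$; you are right to flag this as a point of care. The explicit rate is a minor bonus.
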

\begin{proof} Let $\| B \|_{(HS)}$ denote the Hilbert--Schmidt norm of a linear operator $B\colon L^2\mapsto L^2$. In \eqref{7} we identify an element   $g_1\left(U(s),v(s)\right)$ of $L^2$ with the  multiplication operator 
$$
g_1\left(U(s),v(s)\right)\colon L^2\ni \psi \mapsto g_1\left(U(s),v(s)\right)\psi \in L^2. 
$$
Clearly, its operator norm 
$$
\left\| g_1\left(U(s),v(s)\right)\right\|_{L(L^2,L^2)} \le \sup_{r\in \mathbb{R}, \psi \in L^2} \|g_1(r,\psi)\|:= K. 
$$
Hence using the Burkholder  inequality  and denoting by $S_L(t) =\e^{-Lt}S(t)$,  we obtain 
\begin{align*}
\mathbb{E}\left\| Z_L (t)\right\|_{H^\frac 14} ^p &\le c_p \mathbb{E} \left( \int_0^t\left\|(-A)^{\frac 18}S_L(t-s) g_1\left(U(s),v(s)\right)\right \|_{(HS)}^2 \d s \right)^{\frac{p}{2}}\\
&\le c_pK^p \left( \int_0^{+\infty} \left\|(-A)^{\frac 18}S_L(s)\right \|_{(HS)}^2 \d s \right)^{\frac{p}{2}}\\
&\le c_pK^p \left( \int_0^{+\infty} \e^{-2Ls} \left\|(-A)^{\frac 18}S (s /2)S(s/2) \right \|_{(HS)}^2 \d s \right)^{\frac{p}{2}}.
\end{align*}
Since 
\begin{align*}
\left\|(-A)^{\frac 18}S( s/ 2)S( s/2) \right \|_{(HS)}&\le \left\|(-A)^{\frac 18}S( s/ 2) \right \|_{L(L^2,L^2)}\left\|S( s/2) \right \|_{(HS)}\\
&\le C s^{-\frac 1 8} \left\|S( s/2) \right \|_{(HS)},  
\end{align*}
we have 
\begin{align*}
\mathbb{E}\left\| Z_L (t)\right\|_{H^\frac 14} ^p &\le C' \left( \int_0^{+\infty} \e^{-2Ls } s^{-\frac 14} \sum_{k=1}^\infty \e^{-\frac{\pi ^2}\nu k^2s}
 \d s \right)^{\frac{p}{2}}.
\end{align*} 
Therefore the desired conclusion follows from the Lebesgue dominated convergence theorem and fact that 
$$
\int_0^{+\infty}  s^{-\frac 14} \sum_{k=1}^\infty \e^{-\frac{\pi ^2}\nu k^2s}\d s =
 \int_0^{+\infty} s^{-\frac 14} \e^{-s}\d s \sum_{k=1}^\infty \left(\frac{\pi ^2}\nu k^2\right)^{-1 + \frac 14} <+\infty. 
$$
Clearly, in the same way one can obtain the desired estimate for $Y_L$. 
\end{proof}
\begin{corollary}\label{C1}
Taking into account the  Sobolev embedding $H^{\frac14}\hookrightarrow L^4$,  see Theorem 3, p. 8 in \cite{Runst-Sickel}, we obtain 
$$
\lim_{L\to +\infty} \sup_{t\ge 0} \mathbb{E}\, \|Z_L(t)\|_{L^4}^p=0. 
$$
\end{corollary}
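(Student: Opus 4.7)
The plan is to invoke the Sobolev embedding cited in the statement path by path, convert it into a moment inequality, and then quote Lemma \ref{L2}. Specifically, the embedding $H^{\frac14}(0,1)\hookrightarrow L^4(0,1)$ provides a constant $C>0$ such that
$$
\|\psi\|_{L^4}\le C\,\|\psi\|_{H^{\frac14}}\qquad \text{for all $\psi\in H^{\frac14}$.}
$$
Applying this inequality to $Z_L(t,\omega)$ for each fixed $t\ge 0$ and each $\omega$, raising both sides to the $p$-th power, and then taking expectations, I obtain
$$
\mathbb{E}\,\|Z_L(t)\|_{L^4}^p\le C^p\,\mathbb{E}\,\|Z_L(t)\|_{H^{\frac14}}^p.
$$

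I would then take $\sup_{t\ge 0}$ on both sides. By Lemma \ref{L2}, given any $\delta>0$ one can choose $L_0$ so large that $\sup_{t\ge 0}\mathbb{E}\,\|Z_L(t)\|_{H^{\frac14}}^p\le\delta/C^p$ for all $L\ge L_0$. This makes $\sup_{t\ge 0}\mathbb{E}\,\|Z_L(t)\|_{L^4}^p\le\delta$ for $L\ge L_0$, which is precisely the claimed limit. There is no genuine obstacle here: every ingredient (the embedding, and the convergence of the $H^{\frac14}$ bound as $L\to+\infty$) is already available, so the corollary is essentially a one-line reduction from Lemma \ref{L2}.
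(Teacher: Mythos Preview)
Your argument is correct and is exactly the approach the paper intends: the corollary is stated without a separate proof precisely because it follows immediately from Lemma~\ref{L2} via the Sobolev embedding $H^{\frac14}\hookrightarrow L^4$, which is just what you wrote out in detail.
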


Let 
$$
\begin{aligned}
V_L(t)&:= v(t)-Z_L(t),\\
U_L(t)&:= U(t)-Y_L(t).
\end{aligned}
$$
Note that $V_L$ and $U_L$ are solutions to the following system of equations with random coefficients: 
$$
\begin{aligned}
\frac{\d U_L}{\d t}(t)&= P-\nu U_L(t)- \|V_L(t)+Z_L(t)\|^2 + L Y_L(t),\\ 
U_L(0)&=U_0
\end{aligned}
$$
and 
$$
\begin{aligned}
\frac{\partial  V_L}{\partial t}(t,x)&= \nu \frac{\partial ^2V_L}{\partial x^2} (t,x) + \left(U_L(t)+Y_L(t)\right) \left( V_L(t,x)+ Z_L(t,x)\right) \\
&\quad - \frac{\partial }{\partial x} \left( V_L(t,x)+ Z_L(t,x)\right)^2 +LZ_L(t,x),\\ 
V_L(0,x)&=v_0(x),\\
V_L(t,0)&=0=V_L(t,1),\qquad t>0. 
\end{aligned}
$$

Recall, see e.g. \cite{Runst-Sickel},  that $H^1_0$ is the Sobolev space  consisting of absolutely continuous functions $\psi $ such that $ \frac{\d \psi }{\d  x}\in L^2$,  and  $\psi(0)=\psi(1)=0$.  We can now state  the main result of this section. 
\begin{lemma} \label{L3}
Given $L$ write 
$$
y_L(t):= \|V_L(t)\|^2 + U_L^2(t), \qquad t\ge 0. 
$$
Then  there are independent of $L$  constants $\beta >0$ and  $ C>0$ such that 
$$
\frac{\d }{\d t} y_L(t)+ \beta \left[ y_L(t)+ \|V_L(t)\|_{H^1_0}\right] \le C y_L(t) \xi_L(t) + C\eta_L(t),  
$$
where 
$$
\begin{aligned}
\xi_L(t)&:=  \vert  Y_L(t)\vert  +  \|Z_L(t)\|^2   +\|Z_L(t)\| + \| Z_L(t) \| _{L^4}^{\frac 8 4},\\
\eta_L(t)&:= (1+L^2) \left( Y_L ^4(t)+   \|Z_L(t)\|_{L^4}^4\right)  +1. 
\end{aligned}
$$
\end{lemma}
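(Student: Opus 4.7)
My plan is to derive the inequality pathwise by applying classical energy identities to the random but purely pathwise equations satisfied by $U_L$ and $V_L$. Because the stochastic convolutions $Y_L$ and $Z_L$ have been subtracted off, neither equation contains a residual It\^o integral, so the ordinary chain rule applies to $U_L^2$ and the standard Lions--Magenes chain rule to $\|V_L\|^2$ (the latter justified by parabolic regularity of $V_L$, or equivalently by a Galerkin approximation); no It\^o correction is needed.

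The identities I would write out are, from the ODE,
\[
\frac{\d}{\d t}U_L^2 = 2PU_L - 2\nu U_L^2 - 2U_L\|V_L+Z_L\|^2 + 2LU_LY_L,
\]
and, for the PDE after pairing with $V_L$ in $L^2$ and integrating by parts (using that the Dirichlet boundary condition is preserved by $S(t)$, so both $V_L$ and $Z_L$ vanish at $x\in\{0,1\}$),
\[
\frac{\d}{\d t}\|V_L\|^2 = -2\nu\|V_L\|_{H^1_0}^2 + 2(U_L+Y_L)(\|V_L\|^2+\langle V_L,Z_L\rangle) + 2\langle\partial_x V_L,(V_L+Z_L)^2\rangle + 2L\langle V_L,Z_L\rangle.
\]
Inside the Burgers contribution, $\int V_L^2\partial_x V_L = \tfrac{1}{3}[V_L^3]_0^1 = 0$, so only mixed $V_L,Z_L$ products survive. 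The crucial observation in summing the two identities is that the cubic term $-2U_L\|V_L\|^2$ from the ODE cancels exactly with $+2U_L\|V_L\|^2$ arising from $(U_L+Y_L)\|V_L\|^2$ in the PDE, so all surviving terms are at most linear in $U_L$ and quadratic in $V_L$.

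All the remaining bilinear cross products are estimated by $2ab\le\varepsilon a^2+\varepsilon^{-1}b^2$. The $U_L^2$ and $\|V_L\|^2$ residues are absorbed into the dissipation $-2\nu U_L^2-2\nu\|V_L\|_{H^1_0}^2$, with Poincar\'e $\|V_L\|^2\le C_P\|V_L\|_{H^1_0}^2$ used to manufacture a $-\beta y_L$ term while keeping a positive share of $\|V_L\|_{H^1_0}^2$ in reserve; bounded coefficients multiplying $y_L$ are collected into $\xi_L$, and purely time-dependent leftovers (for instance $L^2Y_L^2\le\tfrac{1}{2}(L^2+L^2Y_L^4)$ coming from $|2LU_LY_L|$, or $P^2$ coming from $|2PU_L|$) are collected into $\eta_L$. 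The main technical obstacle is the cross-nonlinearity $4\int V_LZ_L\partial_xV_L$: Cauchy--Schwarz bounds it by $\|V_L\|_{L^4}\|Z_L\|_{L^4}\|V_L\|_{H^1_0}$, and the 1D Gagliardo--Nirenberg estimate $\|V_L\|_{L^4}\le C\|V_L\|^{3/4}\|V_L\|_{H^1_0}^{1/4}$ converts this into $C\|V_L\|^{3/4}\|Z_L\|_{L^4}\|V_L\|_{H^1_0}^{5/4}$; Young's inequality with exponents $(8/5,8/3)$ then splits it as $\varepsilon\|V_L\|_{H^1_0}^2+C_\varepsilon\|V_L\|^2\|Z_L\|_{L^4}^{8/3}$, the first absorbed by the dissipation and the second contributing to $y_L\xi_L$ (this is the origin of the power on $\|Z_L\|_{L^4}$ appearing in $\xi_L$). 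The remaining quadratic term $2\int Z_L^2\partial_x V_L\le\|Z_L\|_{L^4}^2\|V_L\|_{H^1_0}$ splits as $\varepsilon\|V_L\|_{H^1_0}^2+C_\varepsilon\|Z_L\|_{L^4}^4$, the second piece entering $\eta_L$. Choosing $\varepsilon$ small enough to retain positive dissipation yields the claimed differential inequality with $\beta>0$ and $C>0$ depending only on $\nu$ and the Poincar\'e constant.
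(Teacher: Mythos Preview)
Your proposal is correct and follows essentially the same route as the paper: the same energy identities, the same cancellation of the cubic term $U_L\|V_L\|^2$, the vanishing of $\int V_L^2\partial_x V_L$, the same Gagliardo--Nirenberg/Young splitting of $\int V_LZ_L\partial_x V_L$ yielding the $\|Z_L\|_{L^4}^{8/3}$ contribution to $\xi_L$, and the Poincar\'e inequality to extract the $\beta y_L$ term. The only cosmetic difference is that the paper writes out the long chain of Young-inequality bounds explicitly rather than summarizing them.
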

\begin{proof} We have
\begin{align*}
\left\langle \frac{\partial V_L }{\partial t},V_L \right\rangle  &=\nu \left \langle  \frac{\partial ^2V_L }{\partial x^2},V_L \right\rangle  +U\left\langle V_L
+Z_L ,V_L \right\rangle \\
&-\left\langle \frac \partial {\partial x}(V_L +Z_L )^2,V_L\right\rangle 
+L \left\langle Z_L ,V_L \right\rangle.
\end{align*}
Therefore 
\begin{align*}
\frac 12\frac {\d}{\d t}\| V_L\|^2 &= -\nu \left\langle \frac{\partial V_L }{\partial x},\frac{\partial V_L }{\partial x}\right\rangle  +U\langle V_L ,V_L\rangle +U\langle Z_L ,V_L\rangle \\
&+\left\langle V_L ^2,\frac{\partial V_L }{\partial x}\right\rangle +2\left\langle V_L Z_L ,\frac{\partial V_L }{\partial x}\right\rangle  +\left\langle Z_L ^2,\frac{\partial V_L }{\partial x}\right\rangle  +L \langle  Z_L ,V_L \rangle\\
&= -\nu \left\langle \frac{\partial V_L }{\partial x},\frac{\partial V_L }{\partial x}\right\rangle  +U\langle V_L ,V_L\rangle +U\langle Z_L ,V_L\rangle \\
&+2\left\langle V_L Z_L ,\frac{\partial V_L }{\partial x}\right\rangle  +\left\langle Z_L ^2,\frac{\partial V_L }{\partial x}\right\rangle  +L \langle Z_L ,V_L \rangle, 
\end{align*}
since $\left \langle V_L ^2,\frac{\partial V_L }{\partial x}\right\rangle = 0$.  In other words we have 
\begin{align*}
& \frac 12\frac {\d}{\d t} \| V_L \| ^2+\nu \| V_L \| _{H_0^1}^2 \\
&=U\| V_L \| ^2+U\langle V_L ,Z_L\rangle   +2\int_0^1  V_LZ_L \frac{\partial V_L }{\partial x}\d x+\int_0^1  Z_L ^2\frac{\partial V_L }{\partial x}\d  x+L \langle  Z_L ,V_L \rangle.  
\end{align*}
Further we have
\begin{align*}
\left\vert \int_0^1Z_L ^2\frac{\partial V_L }{\partial x}\d x\right\vert  &\le \| Z_L ^2\|^{\frac 12}\left\| \frac{\partial V_L }{\partial x}\right\| =\| Z_L \| _{L^4}^2\| V_L \| _{H_0^1}  \\
&\le \frac \nu 4 \| V_L \| _{H_0^1}^2+\frac 1{\nu }\| Z_L \| _{L^4}^4.  
\end{align*}
Using an   interpolation estimate, see e.g.  \cite{Lions-Magenes},  we obtain 
\begin{align*}
2\left\vert  \int_0^1  V_L Z_L\frac{\partial V_L }{\partial x}\d x\right\vert  
&\le 2C_1 \| V_L \| ^{\frac 3 4} \| V_L \| _{H_0^1}^{\frac 54} \|Z_L\|_{L^4} \le \frac \nu 4 \| V_L \| _{H_0^1}^2 + C_2\| V_L \| ^2  \|Z_L\|_{L^4}^{\frac 83}
\end{align*}
Hence 
\begin{align*}
\frac 12\frac {\d}{\d t} \| V_L \| ^2+\frac {\nu}{2}  \| V_L \| _{H_0^1}^2&\le  U\| V_L \| ^2+U\langle V_L ,Z_L \rangle +  L \|Z_L\| \|V_L\|   \\
&+C_2  \|V_L \| ^2\| Z_L \| _{L^4}^{\frac 8 4}  +\frac 1 \nu \| Z_L \|_{L^4}^4. 
\end{align*}
Recall that $U= U_L+Y_L$ and
\begin{align*}
\frac 12\frac{\d U_L^2}{\d t}+\nu U_L^2&=U_L\left(P-\| V_L +Z_L \| ^2 + LY_L\right)\\
&= U_L\left( P-\| V_L \| ^2-2\langle V_L ,Z_L \rangle -\| Z_L \| ^2\right) +LU_LY_L
\\
&\le \frac{\nu}{2} U_L^ 2 + \frac{1}{2\nu} P^2 -U_L\| V_L \| ^2-2U_L\langle V_L ,Z_L\rangle -U_L\| Z_L \| ^2 \\
&\qquad +LU_LY_L.
\end{align*}
Therefore adding the above inequality we obtain 
\begin{align*}
&\frac 12\frac {\d}{\d t}\left[  \| V_L \| ^2+U_L^2\right ]+\frac{\nu}{2} \left[ \| V_L \|
_{H_0^1}^2+ U_L^2\right]  \\
&\le Y_L \| V_L\| ^2+(Y_L-U_L) \langle V_L ,Z_L \rangle +L \|Z_L\| \|V_L\|  \\
&+C_2  \|V_L \| ^2\| Z_L \| _{L^4}^{\frac 8 4}  +\frac 1 \nu \| Z_L \|_{L^4}^4 + \frac{1}{2\nu} P^2 -U_L\| Z_L \| ^2 +LU_LY_L.
 \end{align*}
Thus, using  the Young inequality $|ab|\le a^p/p+ b^q/q$, we obtain 
\begin{align*}
&\frac {\d}{\d t}\left [ \| V_L\| ^2+U_L^2\right]+\frac{\nu}{2}  \left[  \| V_L \|_{H_0^1}^2+U_L^2\right ]  \\
&\le  2 \vert Y_L\vert  \| V_L\| ^2+2 (Y_L-U_L) \langle V_L ,Z_L \rangle +C_3 L^2  \|Z_L\|^2   \\
&\quad +2C_2  \|V_L \| ^2\| Z_L \| _{L^4}^{\frac 8 4}  +\frac 2 \nu \| Z_L \|_{L^4}^4 + \frac 1\nu P +C_3\| Z_L \| ^4 +C_3L^2  Y_L^2\\
&\le 2 \vert Y_L \vert \| V_L\| ^2+2 \vert Y_L\vert \|Z_L\|\|V_L\| +   2 \vert U_L\vert \|V_L\| \|Z_L \|   +C_3 L^2  \|Z_L\|^2   \\
&\quad +2C_2  \|V_L \| ^2\| Z_L \| _{L^4}^{\frac 8 4}  +\frac 2 \nu \| Z_L \|_{L^4}^4 + \frac{1}{\nu}P +C_3 \| Z_L \| ^4 +C_3L^2  Y_L^2\\
&\le 2 \vert Y_L \vert \| V_L\| ^2+ \vert Y_L\vert ^2 +  \|Z_L\|^2 \|V_L\|^2  +   U_L^2   \|Z_L \| + \|V_L\|^2 \|Z_L\| \\
&\quad  +C_3 L^2  \|Z_L\|^2  +2C_2  \|V_L \| ^2\| Z_L \| _{L^4}^{\frac 8 4}  +\frac 2 \nu \| Z_L \|_{L^4}^4 \\
&\le + \frac{1}{\nu} P +C_3 \| Z_L \| ^4 +C_3L^2  Y_L^2\\
&\le C_4 \left[ \| V_L\| ^2+U_L^2\right] \left(Y_L(t) +  \|Z_L\|^2   +\|Z_L\| + \| Z_L \| _{L^4}^{\frac 8 4} \right) \\
&\quad +C_4\left( (1+L^2) \vert Y_L\vert ^2 + (1+L^2)  \|Z_L\|^2 +\| Z_L \|_{L^4}^4 + P\right)
\end{align*}
and the desired estimate follows from the Poincar\'e inequality  
$$
\pi ^2\| V_L \| ^2\le \| V_L \| _{H_0^1}^2
$$ 
and from the fact that $\|Z_L\|+\|Z_L\|^2 \le 2 \|Z_L\|^4_{L^4} +2$. 
 \end{proof}
 \begin{corollary}\label{C2}
 Let $X=(U,v)$ be the solution to problem \eqref{3}--\eqref{5} with initial conditions $U(0)=U_0\in \mathbb{R}$ and $v(0)=v_0\in L^2$. Then 
 $$
 \lim_{M\to +\infty} \limsup_{T\to +\infty} \frac 1T \int_0^T \mathbb{P}\left( \vert U(t)\vert + \|v(t)\|\ge M\right)\d t=0. 
 $$
 \end{corollary}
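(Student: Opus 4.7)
The plan is to show $\sup_{t\ge 0}\mathbb{E}(|U(t)|^2+\|v(t)\|^2)<+\infty$ and then conclude by Chebyshev's inequality. Using $(a+b)^2\le 2(a^2+b^2)$ followed by Markov's inequality yields
\[
\frac{1}{T}\int_0^T\mathbb{P}\bigl(|U(s)|+\|v(s)\|\ge M\bigr)\,\d s\le \frac{2}{M^2}\sup_{t\ge 0}\mathbb{E}\bigl(|U(t)|^2+\|v(t)\|^2\bigr),
\]
so sending first $T\to+\infty$ and then $M\to+\infty$ gives the corollary (the time-average and $\limsup_T$ are trivial in view of the uniform bound). Decomposing $U=U_L+Y_L$ and $v=V_L+Z_L$ as in the setup before Lemma~\ref{L3}, Lemma~\ref{L2}---together with the embedding $\|\cdot\|\le\|\cdot\|_{H^{1/4}}$ applied to $\|Z_L\|$---makes $\sup_t\mathbb{E}(Y_L^2(t)+\|Z_L(t)\|^2)$ arbitrarily small for $L$ large, so everything reduces to proving $\sup_{t\ge 0}\mathbb{E}\,y_L(t)\le K_L<+\infty$ for some fixed, sufficiently large $L$, where $y_L=\|V_L\|^2+U_L^2$.

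To bound $\mathbb{E}\,y_L(t)$, I apply pathwise Gronwall to the inequality of Lemma~\ref{L3} (discarding the nonnegative $\beta\|V_L\|_{H_0^1}$ on the left), obtaining
\[
y_L(t)\le y_L(0)\exp\Bigl(-\beta t+C\!\int_0^t\!\xi_L(s)\,\d s\Bigr)+C\!\int_0^t\!\exp\Bigl(-\beta(t-s)+C\!\int_s^t\!\xi_L(r)\,\d r\Bigr)\eta_L(s)\,\d s.
\]
Taking expectations and using Cauchy--Schwarz to decouple the exponential factor from $\eta_L$, the bound on $\sup_t\mathbb{E}\,y_L(t)$ rests on two uniform-in-$t$ ingredients: first, $\sup_t\mathbb{E}\,\eta_L(t)^2<+\infty$, which is immediate from Lemma~\ref{L2} and Corollary~\ref{C1} applied with an exponent $p$ large enough to cover the $Y_L^4$ and $\|Z_L\|_{L^4}^4$ terms in $\eta_L$; second, a uniform exponential-moment bound of the form
\[
\mathbb{E}\exp\Bigl(2C\!\int_s^t\!\xi_L(r)\,\d r\Bigr)\le e^{\gamma(t-s)},\qquad 0\le s\le t,
\]
with some $\gamma<\beta$, valid for all $L$ sufficiently large. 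Combined, the effective dissipation rate $\beta-\gamma/2>0$ then survives in the Gronwall bound and gives $\sup_t\mathbb{E}\,y_L(t)\le K_L$.

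The main obstacle is the exponential-moment estimate above: the multiplicative term $Cy_L\xi_L$ in Lemma~\ref{L3} does not vanish pathwise, only on average as $L\to+\infty$. Since $g_0,g_1$ are bounded, the Burkholder--Davis--Gundy inequality delivers polynomial moments of every order for $|Y_L(t)|$, $\|Z_L(t)\|$ and $\|Z_L(t)\|_{L^4}$, uniformly in $t$, and summing the Taylor series yields a pointwise Laplace bound $\sup_t\mathbb{E}\exp(\mu\,\xi_L(t))<+\infty$ for small $\mu$; Lemma~\ref{L2} and Corollary~\ref{C1} show that the relevant constants tend to those of the zero process as $L\to+\infty$. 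To promote this pointwise bound to the time-integrated bound with rate $\gamma<\beta$, one uses the rapid decorrelation of $Y_L,Z_L$ on the time scale $1/L$ (provided by the damping factor $\e^{-L\cdot}$ in their definition) combined with a Bernstein-type splitting of $[s,t]$ into short sub-intervals. With both ingredients secured, the second-moment estimate follows and the Chebyshev reduction above completes the proof.
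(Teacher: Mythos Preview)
Your plan aims for the stronger statement $\sup_{t\ge 0}\mathbb{E}\,y_L(t)<+\infty$, and the route to it has a genuine gap at the exponential-moment step
\[
\mathbb{E}\exp\Bigl(2C\int_s^t\xi_L(r)\,\d r\Bigr)\le \e^{\gamma(t-s)},\qquad \gamma<\beta.
\]
This does not follow from the pointwise Laplace bound plus ``decorrelation'' in any standard way. Note that $\xi_L$ contains $\|Z_L\|^2$ (and the power $\|Z_L\|_{L^4}^{8/3}$), so $\mathbb{E}\exp(\mu\,\xi_L(t))$ is finite only for \emph{small} $\mu$; any Jensen or H\"older block decomposition of $[s,t]$ inflates the exponent to order $t-s$ and destroys integrability precisely when $t-s$ is large. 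A Bernstein argument would need approximate independence of the blocks, but $Y_L$ and $Z_L$ are neither Gaussian nor Markov on their own: their integrands $g_0(U(s),v(s))$ and $g_1(U(s),v(s))$ depend on the full solution, so the damping factor $\e^{-L(t-s)}$ in the kernel does not translate into block independence of the process. Making your sketch rigorous---if it can be done at all here---would require substantially more than you indicate, and this is exactly the obstacle that led Da~Prato and G\c{a}tarek to avoid second-moment bounds for stochastic Burgers.

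The paper sidesteps the issue with their log trick. Set $\zeta_L(t)=\log\bigl(y_L(t)\vee\tfrac{M}{2}\bigr)$ and multiply the inequality of Lemma~\ref{L3} by $\chi_{\{y_L\ge M/2\}}\,y_L^{-1}\le 2/M$. The troublesome multiplicative term $Cy_L\,\xi_L$ becomes the harmless additive $C\xi_L$, the dissipation $\beta y_L$ becomes $\beta\chi_{\{y_L\ge M/2\}}$, and $C\eta_L$ becomes at most $2C\eta_L/M$. Integrating in $t$ and taking expectations gives
\[
\frac{\beta}{T}\int_0^T\mathbb{P}\Bigl(y_L(t)\ge\tfrac{M}{2}\Bigr)\,\d t\le \frac{C}{T}\int_0^T\Bigl(\mathbb{E}\,\xi_L(t)+\frac{2}{M}\,\mathbb{E}\,\eta_L(t)\Bigr)\,\d t
\]
up to a boundary term $\tfrac{1}{T}\bigl(\zeta_L(0)-\log\tfrac{M}{2}\bigr)$ that vanishes as $T\to+\infty$. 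Only \emph{first} moments of $\xi_L$ and $\eta_L$ are now needed, and Lemma~\ref{L2} makes $\sup_t\mathbb{E}\,\xi_L(t)$ as small as desired by choosing $L$ large, while $\sup_t\mathbb{E}\,\eta_L(t)$ stays bounded. The passage from $y_L$ to $U^2+\|v\|^2$ via the decomposition $U^2+\|v\|^2\le 2y_L+2(Y_L^2+\|Z_L\|^2)$ and Chebyshev on the stochastic convolution part is then as in your outline.
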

 \begin{proof} We use the idea of Da Prato and Gatarek  \cite{DaPrato-Gatarek}, see also Da Prato and Zabczyk \cite{DaPrato-Zabczyk}. Namely, let us fix $M\ge 2$.  Recall, see Lemma \ref{L3},  that $ y_L(t):= \|V_L(t)\|^2 + U_L^2(t)$. Given $L>0$ set 
 $$
 \zeta_L(t):= \log \left( y_L(t) \vee \frac{M}{2}\right). 
 $$
 Then
 $$
 \frac{\d}{\d t}\zeta_L(t)= \chi_{\{y_L(t)\ge M/2\}}\frac {1}{y_L(t)}\frac{\d}{\d t} y_L(t). 
 $$ 
Recall, see Lemma \ref{L3} that 
 $$
 \frac{\d }{\d t} y_L(t)+ \beta y_L(t)\le C y_L(t) \xi_L(t) + C\eta_L(t).  
$$
Multiplying both sides of the inequality above, by 
$$
\chi_{\{y_L(t)\ge M/2\}}\frac {1}{y_L(t)}\le \frac 2M,  
$$
we obtain 
$$
\frac{\d }{\d t} \zeta_L(t)+ \beta \chi_{\{y_L(t)\ge M/2\}}\le C\xi_L(t)+ \frac{2C}{M}\eta_L(t). 
$$
Hence, after integrating over $t$ and taking expectation, we get 
\begin{align*}
&\mathbb{E}\left(\zeta_L(T)-\zeta_L(0)\right) + \beta \int_0^T \mathbb{P}\left( y_L(t)\ge \frac{M}{2}\right)\d t\\
&\quad \le C\int_0^T \left( \mathbb{E}\,\xi_L(t)+ \frac{2 \, \mathbb{E}\, \eta_{L}}{M}\right)\d t. 
\end{align*}
Since $\zeta_L(T)-\zeta_L(0)\ge 0$ because $M\ge 2$, we have 
$$
\frac{1}{T} \int_0^T \mathbb{P}\left( y_L(t)\ge \frac{M}{2}\right)\d t\le \frac{C}{\beta T}\int_0^T \left( \mathbb{E}\,\xi_L(t)+ \frac{2\, \mathbb{E}\, \eta_{L}}{M}\right)\d t. 
$$
Let us fix an $\varepsilon >0$. Then, by Lemma \ref{L2} there is a  constant $L_0$  such that for any $L\ge L_0$, 
$$
\sup_{t\ge 0}  \mathbb{E}\,\xi_L(t)\le \varepsilon \qquad \text{and}\qquad \sup_{t\ge 0}  \mathbb{E}\,\eta_L(t)\le (1+L^2)\varepsilon +1.  
$$
Therefore, for $L\ge L_0$, 
$$
\frac{1}{T} \int_0^T \mathbb{P}\left( y_L(t)\ge \frac{M}{2}\right)\d t\le \frac{C }{\beta }\left[ \varepsilon + \frac{(1+L^2)\varepsilon +1}{M}\right]. 
$$
Since  
$$
U^2(t) + \|v(t)\|^2 = y_L(t)+ Y^2_L(t) + \|Z_L(t)\|^2, 
$$
we have by the Chebyshev inequality 
\begin{align*}
&\frac{1}{T} \int_0^T \mathbb{P}\left( U^2(t) + \|v(t)\|^2 \ge M \right)\d t\\
&\quad \le \frac{1}{T} \int_0^T\left[  \mathbb{P}\left( y_L(t) \ge \frac{M}{2} \right)+\mathbb{P}\left(  Y^2_L(t) + \|Z_L(t)\|^2 
\ge \frac{M}{2} \right)\right] \d t\\
&\quad \le \frac{C }{\beta }\left[ \varepsilon + \frac{(1+L^2)\varepsilon +1}{M}\right] + \frac{2}{M} \sup_{t\ge 0} \mathbb{E}\left[ Y^2_L(t) + \|Z_L(t)\|^2\right], 
\end{align*}
and the desired conclusion follows since by Lemma \ref{L2}, 
$$
\lim_{L\to +\infty} \sup_{t\ge 0} \mathbb{E}\left[ Y^2_L(t) + \|Z_L(t)\|^2\right] =0. 
$$
To be more precise, given $\varepsilon >0$ we first find $L$ such that 
$$
\sup_{t\ge 0} \mathbb{E}\left[ Y^2_L(t) + \|Z_L(t)\|^2\right]  \le \frac{\varepsilon}{2}, 
$$
and than $M>2$ such that 
$$
\frac{C }{\beta }\left[ \varepsilon + \frac{(1+L^2)\varepsilon +1}{M}\right] \le \frac{\varepsilon }{2}.
$$
\end{proof}
\section{Proof of the existence of an  invariant measure}\label{SInvariant}
Taking into account the Krylov-Bogolyubow criterion (see Proposition \ref{P1}) and Corollary \ref{C2} it is enough to show that the family of  laws $(\mathcal{L}(v(t))$, $t>0$ is tight in the space $H^{\frac 14}$.  Recall that  $v(t)=V_L(t) +Z_L(t)$. On the other hand by Lemma \ref{L3} we have  the inequality 
\begin{align*}
\beta \|V_L(t)\|^2_{H^1_0}\le C y_L(t) \xi_L(t) + C\eta_L(t)\le C\left[ y^2_L(t) + \xi^2_L(t) +\eta _L(t)\right]. 
\end{align*}
Since 
$$
y_L(t) = \|V_L(t)\|^2 + U_L^2(t)\le 2\left(  \|v(t)\|^2 + U^2(t)\right) + 2\left( \|Z_L(t)\|^2 + Y^2_L(t)\right),
$$
and $\|\cdot \|_{H^{\frac 1 4}}\le C_ 1 \|\cdot \|_{H^1_0}$ we have 
\begin{align*}
\|v(t)\|_{H^{\frac 14}}&\le  \|Z_L(t)\|_{H^{\frac 14}} + C_2 \left(  \|v(t)\|^2 + U^2(t)\right)  \\
&\quad+ C_2 \left( \|Z_L(t)\|^2 + Y^2_L(t)\right)+ C_2 \left[ \xi^2_L(t) +\eta _L(t)\right]^{\frac 12}\\
&\le C_2 \left(  \|v(t)\|^2 + U^2(t)\right)  + R_L(t), 
\end{align*}
where, by Lemma \ref{L2}, 
$$
R_L(t):=  \|Z_L(t)\|_{H^{\frac 14}}+ C_2 \left( \|Z_L(t)\|^2 + Y^2_L(t)\right)+ C_2 \left[ \xi^2_L(t) +\eta _L(t)\right]^{\frac 12}
$$
satisfies 
$$
\sup_{t>0}  \mathbb{E}\, R_L(t)<+\infty. 
$$
Hence 
\begin{align*}
\mathbb{P}\left( \|v(t)\|_{H^{\frac 14}}\ge M\right) &\le \mathbb{P}\left(\|v(t)\|^2 + U^2(t)  \ge \frac{M}{2C_2}\right)+ \mathbb{P}\left( R_L(t)\ge\frac{M}{2} \right). 
\end{align*}
By Corollary \ref{C2}, 
$$
\lim_{M\to +\infty} \limsup_{T\to +\infty} \frac 1 T \int_0^T  \mathbb{P}\left(\|v(t)\|^2 + U^2(t)  \ge \frac{M}{2C_2}\right)\d t=0.
$$
By Chebyshev's inequality 
\begin{align*}
&\lim_{M\to +\infty} \limsup_{T\to +\infty} \frac 1 T \int_0^T  \mathbb{P}\left(R_L(t) \ge \frac{M}{2}\right)\d t\\
&\quad \le 
\lim_{M\to +\infty}\frac{2\sup_{t>0}  \mathbb{E}\, R_L(t)}{M}=0. \qquad \square
\end{align*}
\section{Strong Feller and Irreducibility}\label{SSFeller-Ireducibility}

In this section we will show that if $g_0$ and $g_1$ are separated from $0$, then the semigroup defined by problem \eqref{3}--\eqref{5} is strong Feller and irreducible. 

Let us recall first some basic definitions and theorems. Assume that $P_t(z,\Gamma)$, $t\ge 0$, $z\in E$, $\Gamma \in \mathcal{B}(E)$ is a transition probability   on a Polish space $E$. Let $(P_t)$ be the corresponding transition semigroup. 
\begin{definition}
We call a transition probability  $P$ \emph{regular}  if the measures $P_t(u,\cdot)$ and $P_t(z,\cdot)$,  $u,z\in E$ and $t,s>0$  are mutually absolutely continuous. \\
We call $P$ \emph{strong Feller} if the semigroup $P_t\colon B_b(E)\mapsto C_b(E)$ for $t> 0$. We call it \emph{irreducible} if $P(t,z,\mathcal{O})>0$ for all $t>0$, $z\in E$ and non-empty open set $\mathcal{O}\subset E$. 
\end{definition}
The first part of the theorem below is due to Khas'minskii \cite{Khas'minskii}. The second part is due to Doob \cite{Doob}. For the proofs of thees two parts we refer the reader also to \cite{DaPrato-Zabczyk}. The third part contains results of Seidler \cite{Seidler} and Stettner \cite{Stettner}. 
\begin{theorem}
(i) If a transition probability  is strong Feller and irreducible then it is regular. (ii) If $P$ is stochastically continuous and regular transition probability and $\mu$ is an invariant measure with respect to $P$, than $\mu$ is unique, equivalent to all measures $P_t(z,\cdot)$ and for all $z\in E$ and $\Gamma \in \mathcal{B}(E)$, 
$$
\lim_{t\to +\infty} P_t(z,\Gamma) = \mu(\Gamma), 
$$
(iii) Assume that the Markov process $X$ with a transition function $P$ is right-continuous, strong Markov and  regular.  Assume that $\mu$ is an invariant-measure for $X$. Then  for any probability measure $\tilde \mu$ on $E$ we gave 
$$
\lim_{t\to +\infty} \|P_t^*\mu-\mu\|_{\mathrm  TV}=0. 
$$ 
\end{theorem}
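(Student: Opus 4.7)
The three parts are classical and I would prove them in order, each by a different well-known technique from Markov process theory.

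For part (i), the key is to combine strong Feller continuity with irreducibility through Chapman--Kolmogorov. Fix $t, s > 0$ and $u, z \in E$, and suppose $P_t(u, \Gamma) > 0$ for some Borel $\Gamma$. By strong Feller, the map $w \mapsto f(w) := P_t(w, \Gamma)$ is continuous, hence $f > 0$ on some non-empty open neighborhood $\mathcal{O}$ of $u$. Then by Chapman--Kolmogorov,
$$
P_{t+s}(z, \Gamma) = \int_E P_t(w, \Gamma) \, P_s(z, dw) \geq \int_{\mathcal{O}} f(w) \, P_s(z, dw) > 0,
$$
since $P_s(z, \mathcal{O}) > 0$ by irreducibility. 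Swapping the roles of $u$ and $z$ and varying $t, s$ shows that all measures in the family $\{P_t(z, \cdot) : t > 0, z \in E\}$ are mutually absolutely continuous, which is regularity.

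Part (ii) splits into uniqueness and convergence. For uniqueness, if $\mu_1$ and $\mu_2$ are invariant, then regularity combined with the representation $\mu_i(\cdot) = \int P_t(z, \cdot) \, \mu_i(dz)$ forces each $\mu_i$ to be equivalent to every $P_t(z, \cdot)$, and therefore to one another. To upgrade equivalence to equality and to obtain the pointwise limit, I would use a martingale argument: for fixed $\Gamma$, the bounded martingale $M_n := \mathbb{E}_\mu[\mathbf{1}_\Gamma(X_n) \mid \mathcal{F}_0]$ converges almost surely to a tail-measurable random variable, and regularity of $P$ forces the tail $\sigma$-algebra to be trivial modulo $\mu$-null sets. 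The limit is therefore constant and can only be $\mu(\Gamma)$, which yields $P_n(z, \Gamma) \to \mu(\Gamma)$ for every $z$; stochastic continuity transfers the statement from discrete to continuous time.

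Part (iii) strengthens this pointwise weak convergence to total variation convergence from an arbitrary initial distribution $\tilde\mu$. The standard route is coupling combined with the strong Markov property. I would construct a coupling $(X, X')$ with $X_0 \sim \tilde\mu$ and $X'_0 \sim \mu$ together with a successful coupling time $\tau$: regularity and the results of part (ii) give a quantitative minorization of the form $P_t(u, \cdot) \geq \alpha \nu(\cdot)$ on a suitable set with a common reference measure $\nu$, and the strong Markov property plus right-continuity permit restarting the construction whenever a coupling attempt fails. Iteration gives $\mathbb{P}(\tau < \infty) = 1$, and the standard bound
$$
\|P_t^* \tilde\mu - \mu\|_{\mathrm{TV}} \leq 2\, \mathbb{P}(\tau > t) \longrightarrow 0
$$
completes the proof. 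The main obstacle will be in part (iii): regularity by itself yields only qualitative mutual absolute continuity between the transition measures, whereas the coupling construction requires a uniform quantitative minorization valid on a set of positive mass. Extracting such a Doeblin-type estimate from regularity, stochastic continuity, and the strong Markov property is the technical heart of the Seidler--Stettner arguments and the step where one departs from routine manipulations.
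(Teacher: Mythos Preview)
The paper does not give its own proof of this theorem: it is stated as a collection of classical results, with part (i) attributed to Khas'minskii, part (ii) to Doob (both with a pointer to the Da~Prato--Zabczyk monograph), and part (iii) to Seidler and Stettner; the reader is simply referred to those sources. Your sketch is correct and reproduces precisely the arguments found in that literature --- the Chapman--Kolmogorov/strong-Feller step for (i), Doob's martingale/tail-triviality route for (ii), and the coupling construction driven by a minorization estimate for (iii), including your accurate identification of the quantitative minorization as the non-trivial ingredient in the Seidler--Stettner argument. There is therefore nothing in the paper to compare against beyond the attributions, and your outline matches them.
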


Recall that the \emph{total variation norm} of  two probability measures $\mu$ and $\tilde \mu$ on $E$ is given as follows 
$$ 
\|\mu-\tilde \mu\|_{\mathrm  TV} = \sup_{\Gamma\in \mathcal{B}(E)} \left\vert \mu(\Gamma)-\tilde \mu(\Gamma)\right\vert. 
$$
\subsection{Proof of strong Feller property} \label{SSFeller} In this part we are going to apply some standard techniques. Therefore we present here only a sketch of the proof. First we need to approximate $g_0$ and $g_1$ by sequence of $C^\infty$ mappings  $g_0^n$ and $g_1^n$ such that the corresponding Lipschitz constants converge; i.e. 
$$
\|g_0^n\|_{\mathrm {Lip}}\to \|g_0\|_{\mathrm {Lip}}\quad \text{and} \quad   \|g_1^n\|_{\mathrm {Lip}}\to \|g_1\|_{\mathrm {Lip}}.
$$
We require also point convergence and the upper and lower bonds: 
$$
\vert g_0^n(r,\psi)- g_0(r,\psi)\vert \to 0\  \text{and} \  \| g_1^n(r,\psi)- g_1(r,\psi)\| \to 0,\  \forall\, r\in \mathbb{R}, \psi\in L^2. 
$$
and there is a constant $C\in (0,+\infty)$ such that for all $r\in \mathbb{R}$ and $\psi\in L^2$ all the following numbers 
$$
  \vert g_0^n(r,\psi)\vert , \quad  \frac{1}{\vert g_0^n(r,\psi)\vert } ,\quad \left \|g_1^n(r,\psi)\right \|,\quad  \left \|\frac{1}{g_1(r,\psi)}\right \|  
 $$
 are less or equal to $C$. In \cite{Peszat-Zabczyk} one can find a construction of such sequences. 
 
We need also to regularize (in fact localize) the mappings $\mathcal{N}(\psi)=\|\psi \|^2$ and $B(\psi)= -\frac{\partial }{\partial x}\psi^2$. We set 
\begin{equation}\label{N}
\mathcal{N}_n(\psi) = \mathcal{N}(\psi) \phi_n(\|\psi\|^2), 
\end{equation}
where $\phi_n\in C^\infty(\mathbb{R})$ is such that $0\le \phi_n(r)\le 1$, $|\phi'(r)|\le 2$ for $r\in \mathbb{R}$, $\phi_{n}(r)=1$ for $r\le n+1$ and $\phi_n(r)= r $ for $r\le n$.  In the same fashion we localize $B$,  
\begin{equation}\label{B}
B_n(\psi)= B\left(\psi \phi_n(\|\psi\|^2)\right). 
\end{equation}
Let $X_{n,m}^z(t)=(U^z_{n,m}(t), v^z_{n,m}(t))$ be the value at time $t$ of the solution to the problem obtained from \eqref{3}--\eqref{5} by replacing $g_i$ by $g_i^m$,  and $\mathcal{N}$ and $B$ by $\mathcal{N}_n$ and $B_n$. We denote   by $z=(U(0), v(0)$ the initial data. We take  for granted the existence and uniqueness of the solution to the modify problem on the state space $H=\mathbb{R}\times L^2$. 

Let 
$$
P^{n,m}_tf(z)= \mathbb{E}\, f(X^z_{n,m}(t)), \qquad z=(r,\psi)\in \mathbb{R}\times L^2, 
$$
be the transition semigroup to the modify problem. As in \cite{Peszat-Zabczyk} one can derive  the Bismut--Elworthy--Li formula
\begin{align*}
&D_r   P^{n,m}_tf((r,\psi))a  + D_\psi P^{n,m}_tf\left((r,\psi)\right)[\varphi] \\
&\quad = \mathbb{E} \, f(X^z_{n,m}(t)) Y_{n,m}(r,\psi,t, a, \varphi),  
\end{align*}
where: $a\in \mathbb{R}$ and $\varphi\in L^2$ are directions,  $D_r$ and  $D_\psi $ denote the derivatives in $r$ and $\psi$, 
\begin{align*}
Y_{n,m}(r,\psi,t, a, \varphi)&= Y^1_{n,m}(r,\psi,t)a +Y^2_{n,m}(r,\psi,t, \varphi),\\
Y^1_{n,m}(r,\psi,t)&:= \frac {1}{t} \int  _0^t \frac{1}{g_0(X^{(r,\psi)}_{n,m}(s))} \frac{\partial }{\partial r} U^{(r,\psi)}_{n,m}(s)\d W_0(s),\\
Y^2_{n,m}(r,\psi,t,\varphi)&:= \frac {1}{t} \int  _0^t \left \langle \frac{1}{g_1(X^{(r,\psi)}_{n,m}(s))} D_\psi U^{(r,\psi)}_{n,m}(s)[\varphi] , \d W_0(s)\right\rangle. 
\end{align*}
We left to the reader the verification of the fact  that there is an independent of $m$, $r$,  $\psi$, $a$,   and $\varphi$ constant $C(n, t)$ such that 
$$
\mathbb{E}\left \vert Y_{n,m}(r,\psi,t,a,\varphi)\right\vert \le C(n,t)\left[ \vert a\vert ^2 + \|\varphi\|^2\right]^{\frac{1}{2}} . 
$$
This ensures not  only the strong Feller property of $(P^{n,m}_t)$ but also gives the estimate  
$$
\|DP^{n,m}_tf(z)\|_{L(H,H)}\le C(n,t)\sup_{z\in H}\vert f(z)\vert. 
$$
Therefore $P_tf$ is Lipschitz continuous function on $H$, and 
$$
\|P^{n,m}_tf\|_{\textrm{Lip}}\le C(n,t). 
$$
Since $C(n,t)$ does not depend on $m$ it enables us to pass with $n\to \infty$ and prove  the strong Feller property of the transition semigroup $(P^n_t)$ for the problem, say problem $\Pi_n$,  obtained from \eqref{3}--\eqref{5} by replacing only $\mathcal{N}$ and $B$ by $\mathcal{N}_n$ and $B_n$. In fact, $P^nf$ is Lipschitz continuous and for any bounded measurable function $f$ we have 
$$
\|P^{n}_tf\|_{\textrm{Lip}}\le C(n,t). 
$$

We can now conclude by using a standard localization procedure. Namely let $X^z_n$ be the solution to problem $\Pi_n$ and let $X^z$ be the solution to problem \eqref{3}--\eqref{5}. Then, from the uniqueness of the solutions and since for $\psi\colon \|\psi\|\le n$ we have $\mathcal{N}_n(\psi)=\mathcal{N}(\psi)$ and $B_n(\psi)=B(\psi)$, we infer that  
\begin{equation}\label{Lok}
X^z_n(s)= X^z(s) \qquad \text{for $s<\tau_{z}^n$},
\end{equation}
where 
$$
\tau_z^n:= \inf\{ s\colon \|X^z(s)\|\ge n\}. 
$$
We can now repeate arguments from \cite{DaPrato-Gatarek, DaPrato-Zabczyk}. Namely, let $f$ be a bounded measurable function on $H=\mathbb{R}\times L^2$. We have 
\begin{align*}
\left\vert P_t^n f(z)-P_tf(z)\right\vert &= \left\vert \mathbb{E}\left[ f(X^z_n(t))-f(X^z(t))\right]\chi_{\{t\ge \tau_z^n\}}\right\vert \\
&\le 2 \sup_{z\in H}\vert f(z)\vert \, \mathbb{P}\left( t\ge \tau_z^n\right). 
\end{align*}
Since functions $P^n_tf$ are   continuous, it is enough to show that for  all $M>0$ and $t>0$ we have 
$$
\lim_{n\to +\infty} \sup_{\|z\|_H\le M} \mathbb{P} \left( t\ge \tau_z^n\right)=0.  
$$
This can be written equivalently as 
$$
\lim_{n\to +\infty} \sup_{\|z\|_H\le M} \mathbb{P} \left( \sup_{s\le t} \|X^z(s)\|_H \ge n\right)=0,
$$
which can be easily deuced from Lemma \ref{L3},  see \cite{DaPrato-Gatarek} for more details, or from the existence of solution in the spaces  $Z_T^p$, $p>1$, established in \cite{Twardowska-Zabczyk1, Twardowska-Zabczyk2, Twardowska-Zabczyk3}.

\subsection{Proof of irreducibility} We will follow \cite{DaPrato-Gatarek, Peszat-Zabczyk}. Namely, from \eqref{Lok} we infer that it is enough to prove the irreducibility for the problem $\Pi_n$  obtained from the original problem \eqref{3}--\eqref{5} by replacing $\mathcal{N}$ and $B$ by $\mathcal{N}_n$ and $B_n$, see formulae \eqref{N} and \eqref{B} from Section \ref{SSFeller}. The idea is to use the Girsanov theorem. Let $\mathcal{O}\not =\emptyset$ be an open set in $H$ and let $T>0$. Let $z= (U_0, v_0)$ be a fixed initial datum.  We are going to show that $P^n_T(z, \mathcal{O})>0$.  Let $\overline z=(\overline U, \overline v)\in \mathcal{O}$. Since the domain $D(A)$ is dense in $L^2$ we can assume that $\overline v\in D(A)$. We will show that for any $\varepsilon>0$, $P^n_T(z, B(\overline z,\varepsilon))>0$, where  $B(\overline z,\varepsilon)$ is the ball in $H$ of radius $\varepsilon $ and center at $\overline z$.  To explain the idea it will be convenient for us to  write our problem in the following form 
$$
\d X_n =\left[ \mathcal{A}X_n + F_n(X_n)\right]\d t + \sigma(X_n)\d W,
$$
with with properly chosen  $\mathcal A$, $F_n$ and $\sigma$. In particular  $\mathcal{A}= (A,-\nu )$. Given  $0<h<T$ define 
$$
f^{h}(s)= \begin{cases} 0&\text{for $s\le T-h$}, \\
h^{-1}\e^{\mathcal{A}(s-(T-h)}\left( \overline z- X_n(T-h)\right) - \mathcal{A}z &\text{for  $s> T-h$}. 
\end{cases}
$$
Note that the mapping $f^{h}$ is such that 
\begin{equation}\label{Poy}
X_n(T-h) + \int_{T-h}^T \e ^{\mathcal{A}(T-s)}f^{h,m}(s)\d s = \overline z. 
\end{equation}
Let 
$$
\zeta_{h,m}(s):= \sigma(X_n(s))^{-1} \left[ f^{h}(s)\right] \chi_{\{\|X_n(T-h)\|_H\le m\}}. 
$$
Then under the probability measure 
$$
\d \mathbb{P}^* = \exp\left\{-\int_0^T  \zeta_{h,m}(s)\d W(s)-\frac 12 \int_0^T \|\zeta_{h,m}(s)\|^2 \d s\right\} \d \mathbb{P}, 
$$
$$
W^*(t)= W(t)+ \int_0^t \zeta_{h,m}(s)\d s, \qquad t\le T,
$$
is a cylindrical Wiener process on $H$.  Therefore the law of $X_n$ is equivalent to the law of the solution $\tilde X_n$ to the following equation
$$
\d \tilde X_n=\left[  \mathcal{A}\tilde X_n + F_n(\tilde X_n)+ f^h\chi_{\{\|\tilde X_n\|_H\le m\}}\right] \d t +  \sigma(X_n)\d W.
$$
Taking into account, \eqref{Poy}, we have
$$
\tilde X_n(T)= \overline z + R_h(T)\quad \text{if $\|X_n(T-h)\|_H\le m$,}
$$
where 
$$
R_h(T):= \int_{T-h}^T \e^{\mathcal{A}(T-s)}F_n(\tilde X_n(s))\d s+ \int_{T-h}^T \e^{\mathcal{A}(T-s)}\sigma(\tilde X_n(s))\d W(s).  
$$
Obviously we have
$$
\lim_{m\to +\infty}\mathbb{P}\left( \|X_n(T-h)\|_H\le m\right)=1.
$$
Thus the proof will be completed as soon as we can show that for any $\varepsilon >0$ we have 
$$
\lim_{h\downarrow 0} \mathbb{P}\left( \|R_h(T)\|_{H}<\varepsilon\right)>0.
$$
Since  it can be done in the same way as in Da Prato and Gatarek \cite{DaPrato-Gatarek} and we leave it to the reader.

\end{document}